\numberwithin{equation}{section}
\newtheorem{theorem}[subsection]{Theorem}
\newtheorem{lemma}[subsection]{Lemma}
\newtheorem{corollary}[subsection]{Corollary}
\newtheorem{definition}[subsection]{Definition}
\theoremstyle{remark}
\newtheorem{rmk}[subsection]{Remark}
\title{Quasi-constant fundamental weights in terms of Levi Weyl groups}
\numberwithin{equation}{section}
\newskip\procskipamount
\newskip\interskipamount
\newskip\refskipamount
\newcommand{\procskip}{\vskip\procskipamount}
\newcommand{\interskip}{\vskip\interskipamount}
\newcommand{\refskip}{\vskip\refskipamount}
\newcommand{\procbreak}{\par
   \ifdim\lastskip<\procskipamount\removelastskip
   \penalty-100
   \procskip\fi
   \noindent\ignorespaces}
\newcommand{\titlebreak}{\par%
\ifdim\lastskip<\interskipamount\removelastskip%
\penalty10000%
\interskip\fi%
\noindent}%
\newcommand{\interbreak}{\par%
\ifdim\lastskip<\interskipamount\removelastskip%
\penalty-100%
\interskip\fi%
\noindent\ignorespaces}%
\newcommand{\refbreak}{\par%
\ifdim\lastskip<\refskipamount\removelastskip%
\penalty-100%
\refskip\fi%
\noindent\ignorespaces}%
\newcounter{listcounter}
\newcounter{deflistcounter}
\newcounter{equivcounter}
\newskip{\itemsepamount}
\newskip{\topsepamount}
\newenvironment{assertionlist}{%
  \begin{list}
    {\upshape (\arabic{listcounter})}
    {\setlength{\leftmargin}{18pt}
     \setlength{\rightmargin}{0pt}
     \setlength{\itemindent}{0pt}
     \setlength{\labelsep}{5pt}
     \setlength{\labelwidth}{13pt}
     \setlength{\listparindent}{\parindent}
     \setlength{\parsep}{0pt}
     \setlength{\itemsep}{\itemsepamount}
     \setlength{\topsep}{\topsepamount}
     \usecounter{listcounter}}}
  {\end{list}}
\newenvironment{definitionlist}{%
  \begin{list}
    {\upshape (\alph{deflistcounter})}
    {\setlength{\leftmargin}{18pt}
     \setlength{\rightmargin}{0pt}
     \setlength{\itemindent}{0pt}
     \setlength{\labelsep}{5pt}
     \setlength{\labelwidth}{13pt}
     \setlength{\listparindent}{\parindent}
     \setlength{\parsep}{0pt}
     \setlength{\itemsep}{\itemsepamount}
     \setlength{\topsep}{\topsepamount}
     \usecounter{deflistcounter}}}
  {\end{list}}
\newenvironment{equivlist}{%
  \begin{list}
    {\upshape (\roman{equivcounter})}
    {\setlength{\leftmargin}{18pt}
     \setlength{\rightmargin}{0pt}
     \setlength{\itemindent}{0pt}
     \setlength{\labelsep}{5pt}
     \setlength{\labelwidth}{13pt}
     \setlength{\listparindent}{\parindent}
     \setlength{\parsep}{0pt}
     \setlength{\itemsep}{\itemsepamount}
     \setlength{\topsep}{\topsepamount}
     \usecounter{equivcounter}}}
  {\end{list}}
\newcommand{\GG}{\mathbf{G}}
\newcommand{\QQ}{\mathbf{Q}}
\newcommand{\RR}{\mathbf{R}}
\newcommand{\TT}{\mathbf{T}}
\newcommand{\ZZ}{\mathbf{Z}}
\newcommand{\sgn}{\textnormal{sgn}}
\newcommand{\Sgn}{\mathsf{Sgn}}
\newcommand{\even}{\textnormal{even}}
\newcommand{\height}{\textnormal{ht}}
\DeclareMathOperator{\dom}{dom}
\newcommand{\leftexp}[2]{{\vphantom{#2}}^{#1}{#2}}
\DeclareMathOperator{\ad}{ad}
\DeclareMathOperator{\type}{type}
\newcommand{\gal}{{\rm Gal}}
\newcommand{\galk}{\gal(\overline{k}/k)}
\newcommand{\Th}{{\rm Th.}}
\newcommand{\Prop}{{\rm Prop.}}
\newcommand{\loccit}{{\em loc.\ cit. }}
\newcommand{\cf}{{\em cf. }}
\newcommand{\ie}{i.e.,\ }
 \noindent \texttt{wushijig@gmail.com} \par
\date{\today}
\author{Wushi Goldring}
\begin{document}

\pagestyle{plain}
\maketitle
\section{Introduction}
\label{sec-intro}
\subsection{Quasi-constancy}
Let $k$ be a field. Let $\GG$ be a connected, reductive $k$-group with maximal $k$-torus $\TT \subset \GG$, associated root datum $\mathsf{RD}(\GG, \TT)=(X^*(\TT), \Phi; X_*(\TT), \Phi^{\vee})$, perfect pairing $\langle, \rangle: X^*(\TT) \times X_*(\TT) \to \ZZ$ and Weyl group $W=W(\Phi)$. Motivated by the root-theoretic properties of the Hodge line bundle on a Hodge-type Shimura variety, Koskivirta and the author introduced in \cite[\S N.5.1]{Goldring-Koskivirta-Strata-Hasse} the purely group-theoretic (or root-data-theoretic) notion of a \emph{quasi-constant} character $\chi \in X^*(\TT)$ or cocharacter $\mu \in X_*(\TT)$. Recall:
\begin{definition} A character $\chi \in X^*(\TT)$ is quasi-constant if, for all $\alpha \in \Phi$ with $\langle \chi, \alpha^{\vee} \rangle \neq 0$ and every $\sigma \in W \rtimes \galk$, one has \begin{equation}
\label{eq-def-quasi-constant}
\frac{\langle \chi, \sigma\alpha^{\vee} \rangle }{\langle \chi, \alpha^{\vee} \rangle}
\in \{-1,0,1\}.
\end{equation}
\label{def-quasi-constant}
\end{definition}
In \cite{Goldring-Koskivirta-quasi-constant}, the authors classified quasi-constant (co)characters and showed that the notion `quasi-constant' naturally unifies those of minuscule and co-minuscule. In particular, if $k$ is algebraically closed and $\GG$ is simple, then a character $\chi \in X^*(\TT)$ is quasi-constant  if and only if it is a multiple of a fundamental weight (relative to some choice of simple roots $\Delta \subset \Phi$) which is either minuscule or co-minuscule \cite[\Th~1.2.1]{Goldring-Koskivirta-quasi-constant}. When $k$ is algebraically closed, the Galois group is trivial and the quasi-constant condition~\ref{def-quasi-constant} depends only on the root system associated to $\mathsf{RD}(\GG, \TT)$. In this note, we henceforth assume $k$ is algebraically closed and use the language of root systems to stress that the additional data in $\mathsf{RD}(\GG, \TT)$ does not play a role here.

In \cite{Goldring-Koskivirta-quasi-constant}, see esp. \S5.2, we argued why it seemed that `quasi-constant' was perhaps a more natural notion than either `minuscule' or `co-minuscule' separately. The goal of this note is to illustrate yet another way in which the quasi-constant condition is natural, by showing that it is equivalent to a property of Weyl groups of maximal Levi subgroups which we now describe.

\subsection{The action of maximal Levi Weyl groups on simple roots}
Let $(V, \Phi, (,))$ be a reduced and irreducible root system with Weyl group $W:=W(\Phi)$, where $V$ is a finite-dimensional $\QQ$-vector space, $(,)$ is a non-degenerate, $\QQ$-valued, $W$-invariant, symmetric bilinear form on $V$ which is positive definite on $V_{\RR}:=V \otimes_{\QQ}\RR$ and $\Phi \subset V \setminus\{0\}$ is the set of roots. For instance, in terms of the root datum $\mathsf{RD}(\GG, \TT)$, assuming $k $ is algebraically closed and the adjoint group $\GG^{\ad}$ of $\GG$ is simple, one may take $V$ to be the $\QQ$-span of $\Phi$ and take $(,)$ to be any $W$-invariant, positive definite symmetric bilinear form (which is unique up to positive scalar).   Let $\Delta \subset \Phi$ be a base of simple roots and choose $\alpha \in \Delta$. Let $W_{\alpha}$ be the Levi Weyl group of the maximal sub-root system generated by $\Delta \setminus \{\alpha \}$ \ie $W_{\alpha}$ is generated by the simple root reflections $s_{\alpha'}$ with $\alpha' \in \Delta \setminus \{\alpha\}$. In the literature, the subgroups $W_{\alpha}$ are often also referred to as maximal parabolic subgroups of $W$. Given $v \in V$, write $\dom(v)$ (resp. $\dom_{\alpha}(v)$) for the unique $\Delta$-dominant (resp. $\Delta \setminus \{\alpha\}$-dominant) conjugate of $v$ under $W$ (resp. $W_{\alpha}$).

It is natural to ask when one has $\dom(\alpha)=\dom_{\alpha}(\alpha)$. That is, by definition there exists $w \in W$ such that $w\alpha=\dom(\alpha)$, but when can the same action be achieved by some $w' \in W_{\alpha}$?

Let $(\eta(\alpha))_{\alpha \in \Delta}$ be the basis of $V$ of fundamental weights, \ie the dual basis of $\Delta^{\vee}:=\{\alpha^{\vee} | \alpha \in \Delta\}$ relative to $(,)$, where $\alpha^{\vee}:=2\alpha/(\alpha, \alpha)$ is the coroot of $\alpha$. Our main result is:
\begin{theorem}
\label{th-levi-dom-quasi-constant}
Let $\alpha \in \Delta$. The following are equivalent:
\begin{enumerate}
\item
\label{item-fund-weight-quasi-constant}
The fundamental weight $\eta(\alpha)$ is quasi-constant.
\item
\label{item-special-cospecial}
The simple root $\alpha$ is either special or co-special (see \ref{sec-special-cospecial}).
\item
\label{item-dom-dom}
One has  $\dom(\alpha)=\dom_{\alpha}(\alpha)$.
\end{enumerate}
\end{theorem}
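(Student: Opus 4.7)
The plan is to derive (a)$\iff$(b) from the classification of quasi-constant fundamental weights in \cite[Theorem~1.2.1]{Goldring-Koskivirta-quasi-constant}, which identifies them with the minuscule or co-minuscule weights, and then to establish (b)$\iff$(c) by analyzing the $W_\alpha$-orbit of $\alpha$. As a preliminary reformulation, (c) is equivalent to the statement $\dom(\alpha)\in W_\alpha\cdot\alpha$, since every $\Delta$-dominant vector is in particular $\Delta\setminus\{\alpha\}$-dominant and each $W_\alpha$-orbit contains a unique $\Delta\setminus\{\alpha\}$-dominant element.

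The core of the argument is an orbit description: $W_\alpha\cdot\alpha$ coincides with the set of roots $\gamma\in\Phi$ having the same length as $\alpha$ and $\alpha$-coefficient (in the simple root basis) equal to $1$. The inclusion $\subseteq$ is immediate since the reflections $s_\beta$ generating $W_\alpha$ (for $\beta\in\Delta\setminus\{\alpha\}$) preserve both the length of a vector and its $\alpha$-coefficient. The reverse inclusion can be proved by height-decreasing descent in $W_\alpha$; alternatively, one appeals to the Lie-theoretic structure of the first graded piece $\ufr_1$ of the nilradical of the maximal parabolic attached to $\Delta\setminus\{\alpha\}$, whose $W_\alpha$-orbits on weights are indexed by root length.

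Granting the orbit description, (c) becomes the assertion that $\dom(\alpha)$ has $\alpha$-coefficient $1$. Now $\dom(\alpha)$ is the highest root in $W\cdot\alpha$: it equals $\tilde\alpha$ if $\alpha$ is long and the highest short root $\tilde\alpha_s$ if $\alpha$ is short. In the long case, the $\alpha$-coefficient of $\tilde\alpha$ is by definition the mark $n_\alpha$, so (c) amounts to $n_\alpha=1$, i.e., $\alpha$ is special. In the short case, the identification $\tilde\alpha_s^\vee=\widetilde{\alpha^\vee}$ (both are the $\Delta$-dominant long root of $\Phi^\vee$), combined with $\alpha_i=\tfrac{(\alpha_i,\alpha_i)}{2}\alpha_i^\vee$ and $(\alpha,\alpha)=(\tilde\alpha_s,\tilde\alpha_s)$, shows that the $\alpha$-coefficient of $\tilde\alpha_s$ equals the comark $n_\alpha^\vee$, so (c) amounts to $n_\alpha^\vee=1$, i.e., $\alpha$ is co-special. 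Since a special simple root is always long and a co-special one always short (visible on the extended Dynkin diagrams), the two cases assemble into (c)$\iff$(b).

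The main obstacle is the orbit description; its reverse inclusion can be handled either combinatorially—by showing that $\alpha$ is the unique $\Delta\setminus\{\alpha\}$-anti-dominant root of its length with $\alpha$-coefficient $1$—or via the parabolic Lie-theoretic structure, with both routes requiring careful bookkeeping.
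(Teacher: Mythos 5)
Your outline follows essentially the same route as the paper on the substantive implication: (a)$\Leftrightarrow$(b) is quoted from the classification, and (b)$\Leftrightarrow$(c) is reduced to understanding the $W_\alpha$-orbit of $\alpha$, to be established by height-decreasing descent inside $W_\alpha$. The genuine gap is that the decisive step---the reverse inclusion of your orbit description, i.e.\ that every root of the same length as $\alpha$ with $\alpha$-coefficient $1$ lies in $W_\alpha\cdot\alpha$---is asserted rather than proved, and this is exactly where the content of the theorem sits. The paper closes it with Lemma~\ref{lem-root-pairings} (a long positive root $\beta$ with $(\beta,\alpha')\le 0$ for all $\alpha'\in\Delta\setminus\{\alpha\}$ and $m_\beta(\alpha)\le 1$ must equal $\alpha$; a short computation with the bilinear form) followed by the height induction of Lemma~\ref{lem-special-weyl-conj-long-root}, treating short $\alpha$ by passing to $\Phi^\vee$. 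Your ``combinatorial route'' (uniqueness of the $\Delta\setminus\{\alpha\}$-anti-dominant root of the given length with $\alpha$-coefficient $1$) is the correct terminal statement and is provable by the same computation---indeed, once $\beta$ is required to have the same length as $\alpha$, the chain $(\beta,\beta)\le(\beta,\alpha)\le(\alpha,\alpha)$ collapses to equalities without assuming $\beta$ long---but as submitted you have not supplied it. The Lie-theoretic alternative you offer is also not an off-the-shelf fact in the form cited: Kostant's theorem gives irreducibility of the graded piece $\ufr_1$ as a Levi module, which does not by itself yield ``one $W_\alpha$-orbit per root length''; making that precise needs something like \cite[Lemma 4.4]{Landsberg-Manivel-projective-geometry-homogeneous-varieties}, as the paper's Remark~\ref{rmk-landsberg-manivel-wolf} indicates.

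Where you genuinely differ is the packaging of the two length cases. The paper proves the long case (special $\Rightarrow$ $W_\alpha$-conjugate to the highest root) and obtains the short case by dualizing, while in the converse direction it reads off $m_{\dom(\alpha)}(\alpha)=1$ from Lemma~\ref{lem-levi-weyl-preserves-root-mult}; you instead work uniformly with roots of the same length as $\alpha$ and identify the $\alpha$-coefficient of the highest short root with the comark via $\alpha_i=\tfrac{(\alpha_i,\alpha_i)}{2}\alpha_i^\vee$---that computation is correct and would let you avoid the dualization entirely. The small price is your final assembly: for (b)$\Rightarrow$(c) you need ``special $\Rightarrow$ long'' and ``co-special $\Rightarrow$ short'' in the multi-laced types, which you justify only by inspection of extended Dynkin diagrams, whereas the paper sidesteps this by arguing the special and co-special cases directly (Corollary~\ref{cor-special-conjugate-highest-root} and its dual) with no length claim needed. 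So: right strategy, correct reductions and correct auxiliary computations, but the crux lemma and the descent it feeds still have to be written out.
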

In view of the previously mentioned classification~\cite[\Th~1.2.1]{Goldring-Koskivirta-quasi-constant},  the equivalence of~\ref{item-fund-weight-quasi-constant} and~\ref{item-special-cospecial} is just a matter of translating between definitions; it was noted in \S2.2.3 of \loccit and is included for convenience. 
The content of the theorem is the equivalence of~\ref{item-fund-weight-quasi-constant},~\ref{item-special-cospecial} with~\ref{item-dom-dom}.
\subsection{Outline}
\label{sec-outline}
\S\ref{sec-notation} introduces notation and recalls pertinent basic results on Weyl group orbits and (co)root multiplicities. Theorem~\ref{th-levi-dom-quasi-constant} is proved in \S\ref{sec-proof}. Examples of Theorem~\ref{th-levi-dom-quasi-constant} are given in \S\ref{sec-examples}. 
\section*{Acknowledgements}
We discovered Theorem~\ref{th-levi-dom-quasi-constant} in the examples of \S\ref{sec-examples} when attempting to understand the work of Jean-Stefan Koskivirta on the "global sections cone" of stacks of $G$-Zips, see \cite{Koskivirta-automforms-GZip}. In turn \loccit is an attempt to better understand our joint conjecture \cite[2.1.6]{Goldring-Koskivirta-global-sections-compositio}, which in particular implies that the global sections cone of a Shimura variety of Hodge type is equal to the corresponding (entirely group-theoretic) $G$-Zip global sections cone.  I'm very grateful to Jean-Stefan for our continued collaboration and discussions related to this note. 

We thank Eric Opdam for discussions related to this note, in particular for introducing us to Lusztig's notion of \textit{special} element in an affine Weyl group \cite[1.11]{Lusztig-unip-p-adic-IMRN-special}. We hope to establish a connection between Lusztig's notion and `quasi-constant' in future work.

We thank the Knut \& Alice Wallenberg Foundation for its support under grants KAW 2018.0356 and Wallenberg Academy Fellow KAW 2019.0256.

We thank the referee for their careful reading and for pointing  us to the references \cite{Landsberg-Manivel-projective-geometry-homogeneous-varieties,wolf-book-constant-curvature}.

\section{Notation and review}
\label{sec-notation}

 Everything we will use about root systems (and much more) is contained in \cite{bourbaki-lie-4-6}.
\subsection{Weyl group orbits}
 \label{sec-weyl-orbits}
Recall that two roots $\alpha ,\beta$ in the reduced and irreducible system $\Phi$ have the same length if and only if they are conjugate under the Weyl group $W$. One says $ \Phi$ (resp. $\Delta$)  is \underline{simply-laced} if $W$ acts transitively on $\Phi$; this corresponds to no two vertices in the Dynkin diagram associated to $\Delta$ being linked by more than one edge. Otherwise one says $\Phi$ is \underline{multi-laced}, there are precisely two Weyl group orbits in $\Phi$ consisting of long and short roots respectively and the associated Dynkin diagram contains a unique pair of adjacent vertices connected by either $2$ edges (types $B_n, C_n$ for $n \geq 2$ and $F_4$) or $3$ edges (type $G_2$) . We adopt the convention that when $\Phi$ is simply-laced, every root is both long and short.

\subsection{Root multiplicities}
\label{sec-root-multiplicities}
Given $\beta \in \Phi$, write $m_{\beta}(\alpha)$ (resp. $m^{\vee}_{\beta}(\alpha)$) for the multiplicity of $\alpha$ in $\beta$ (resp. the multiplicity of $\alpha^{\vee}$ in $\beta^{\vee}$), so that \begin{equation}
\label{eq-root-mult}
\beta
=
\sum_{\alpha \in \Delta}m_{\beta}(\alpha) \alpha
\hspace{.5cm}
\textnormal{ and }
\hspace{.5cm}
\beta^{\vee}
=
\sum_{\alpha \in \Delta}m^{\vee}_{\beta}(\alpha)\alpha^{\vee}.
\end{equation}
Recall that $\Phi \to \Phi^{\vee}$, $\alpha \to \alpha^{\vee}$ is not additive when $\Phi$ is multi-laced, so that in general the multiplicities $m_{\beta}(\alpha)$ and $m^{\vee}_{\beta}(\alpha)$ are different.
We say that $\alpha \in \Delta$ appears in $\beta \in \Phi$ if the multiplicity $m_{\beta}(\alpha) \neq 0$.
\subsection{Highest root and dual of the highest coroot}
\label{sec-highest-root}
Denote by $\Phi^+ \subset \Phi$ the positive roots corresponding to the base $\Delta \subset \Phi$.
Recall that there is a unique root $\alpha^h$, called the highest root, which is characterized by $m_{\alpha^h}(\alpha)\geq m_{\beta}(\alpha) $ for all positive roots $\beta \in \Phi^+$ and all $\alpha \in \Delta$.
 When $\beta=\alpha^h$ is the highest root, we abbreviate $m(\alpha):=m_{\alpha^h}(\alpha).$

 Write $\leftexp{h}{\alpha}^{\vee}$ for the highest coroot, \ie the the highest root in $\Phi^{\vee}$; the dual $\alpha^{h_2}:=(\leftexp{h}{\alpha}^{\vee})^{\vee}$ is a root which is $\Delta$-dominant. In view of \S\ref{sec-weyl-orbits}, one has $\alpha^{h_2}=\alpha^h$ if and only if $\Phi$ is simply-laced; if $\Phi$ is multi-laced then $\alpha^{h_2}$ is the highest short root. When $\beta^{\vee}=\leftexp{h}{\alpha}^{\vee}$ is the highest coroot, we abbreviate $m^{\vee}(\alpha):=m^{\vee}_{\alpha^{h_2}}(\alpha)$ the multiplicity of $\alpha^{\vee}$ in the highest coroot $\leftexp{h}{\alpha}^{\vee}=(\alpha^{h_2})^{\vee}$.
 
 If $\Phi$ is simply-laced, then the highest root $\alpha^h$ is the unique $\Delta$-dominant root and $\dom(\alpha)=\alpha^h$ for all $\alpha \in \Phi$.
Assume $\Delta$ is multi-laced. Then $\alpha^h$ and $\alpha^{h_2}$ are the two and only two $\Delta$-dominant roots.
For $\alpha \in \Phi$, one has $\dom(\alpha)=\alpha^h$ if and only if $\alpha$ is long and $\dom(\alpha)=\alpha^{h_2}$ if and only if $\alpha$ is short.

\subsection{Height}
\label{sec-height}
Recall that the \underline{height} of a positive root $\beta \in \Phi^+$ is $\height(\beta):=\sum_{\alpha \in \Delta}m(\alpha)$, the sum of the root multiplicities.
\subsection{Special and co-special simple roots}
\label{sec-special-cospecial}
In \cite[1.2.5]{Deligne-Shimura-varieties}, Deligne calls a simple root $\alpha \in \Delta$ \underline{special} if its multiplicity $m(\alpha)=1$; by analogy Koskivirta and the author termed $\alpha$ \underline{co-special} if the dual multiplicity $m^{\vee}(\alpha)=1$, \cite[\S2.1.6]{Goldring-Koskivirta-quasi-constant}. By definition $\alpha$ is special (resp. co-special) if and only if $\alpha^{\vee}$ is co-special (resp. special).

\section{Proof of the theorem}
\label{sec-proof}
\begin{lemma}
\label{lem-root-pairings}
Assume $(V, \Phi, (,))$ is a reduced and irreducible root system with base $\Delta \subset \Phi$.  Consider a simple root $\alpha \in \Delta$ and a long, positive root $\beta \in \Phi^+$. Assume that: \begin{enumerate}
\item
\label{item-lem-unique-not-orthogonal-root}
One has $(\beta, \alpha') \leq 0$ for all $\alpha' \in \Delta \setminus \{\alpha\}$;
\item
\label{item-lem-mult-at-most-1}
$\alpha$ appears at most once in $\beta$: The multiplicity $m_{\beta}(\alpha) \leq 1$.
\end{enumerate}
Then $\beta=\alpha$.
\end{lemma}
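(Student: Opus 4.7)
The strategy is to extract from the two hypotheses an inequality of the form $(\beta, \alpha) \geq (\beta, \beta)$, and then combine this with Cauchy--Schwarz plus the long-root condition to force $\beta = \alpha$.

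First I would expand $\beta = \sum_{\alpha'' \in \Delta} m_\beta(\alpha'') \alpha''$ and pair both sides with $\beta$, obtaining $(\beta, \beta) = \sum_{\alpha'' \in \Delta} m_\beta(\alpha'') (\beta, \alpha'')$. Since $\beta \in \Phi^+$, every multiplicity $m_\beta(\alpha'') \geq 0$; by hypothesis~\ref{item-lem-unique-not-orthogonal-root}, each term with $\alpha'' \neq \alpha$ is a product of a non-negative and a non-positive number, hence $\leq 0$. Therefore $(\beta, \beta) \leq m_\beta(\alpha)(\beta, \alpha)$. Since $(\beta, \beta) > 0$ by positive-definiteness, the case $m_\beta(\alpha) = 0$ is immediately excluded; hypothesis~\ref{item-lem-mult-at-most-1} then pins down $m_\beta(\alpha) = 1$, and the display collapses to $(\beta, \alpha) \geq (\beta, \beta) > 0$.

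The second step is Cauchy--Schwarz: $(\beta, \alpha)^2 \leq (\beta, \beta)(\alpha, \alpha)$. Squaring the inequality from the first step and substituting yields $(\beta, \beta)^2 \leq (\beta, \beta)(\alpha, \alpha)$, hence $(\beta, \beta) \leq (\alpha, \alpha)$. But $\beta$ is long by assumption, so $(\beta, \beta) \geq (\alpha, \alpha)$; thus equality holds throughout, including equality in Cauchy--Schwarz. The latter forces $\beta$ and $\alpha$ to be proportional, and since both are positive roots of the same length the proportionality constant must be $1$, giving $\beta = \alpha$.

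There is no serious obstacle: the proof is essentially a positivity/pairing computation combined with Cauchy--Schwarz. The one point worth flagging is the interplay between the two hypotheses and the long-root assumption: hypothesis~\ref{item-lem-mult-at-most-1} is precisely what converts the bound $(\beta,\beta) \leq m_\beta(\alpha)(\beta,\alpha)$ into the useful form $(\beta,\alpha) \geq (\beta,\beta)$, and the long-root hypothesis is exactly what is needed to turn the Cauchy--Schwarz bound $(\beta,\beta) \leq (\alpha,\alpha)$ into equality. Removing either hypothesis breaks the argument, which matches the geometric picture that $\beta$ ought to be a ``lowest long root on the $\alpha$-side'' and hence equal to $\alpha$ itself.
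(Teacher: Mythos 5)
Your proof is correct, and its first half---pairing $\beta$ with its expansion over $\Delta$ to get $(\beta,\beta)\le m_\beta(\alpha)(\beta,\alpha)$, hence $m_\beta(\alpha)=1$ and $(\beta,\beta)\le(\beta,\alpha)>0$---is exactly the paper's opening step. The finish differs only cosmetically: the paper sets $\beta'=\beta-\alpha$, bounds $(\beta,\alpha)=(\alpha,\alpha)+(\beta',\alpha)\le(\alpha,\alpha)$ using that distinct simple roots pair non-positively, and then gets $(\beta',\beta')=0$ by expanding $(\beta,\beta)$, whereas you squeeze $(\beta,\beta)\le(\alpha,\alpha)$ via Cauchy--Schwarz and conclude from its equality case; the two finishes are essentially equivalent, yours being marginally more self-contained since it does not reuse the non-positivity of pairings between distinct simple roots.
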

\begin{rmk}
\label{rmk-landsberg-manivel-wolf}
We thank the referee for pointing out that Lemma~\ref{lem-root-pairings} is closely related to an unpublished result of Kostant on modules of Lie algebras (\cf \cite[\Prop~2.4]{Landsberg-Manivel-projective-geometry-homogeneous-varieties} and \cite[\Th~8.13.3]{wolf-book-constant-curvature}; the methods of proof there are quite different from ours). The lemma is also closely related to \cite[Lemma 4.4]{Landsberg-Manivel-projective-geometry-homogeneous-varieties}; in fact the latter can be used to give a variant of our proof that \ref{item-special-cospecial}$\Rightarrow$\ref{item-dom-dom} in Theorem~\ref{th-levi-dom-quasi-constant}. 
\end{rmk}
\begin{rmk}
\label{rmk-unique-root-not-orthogonal}
Since $(\beta, \beta)>0$, ~\ref{lem-root-pairings}\ref{item-lem-unique-not-orthogonal-root} implies that $(\beta, \alpha)>0$, so that
$\alpha$ is the unique simple root which has strictly positive pairing with $\beta$.
\end{rmk}
\begin{proof}[Proof of Lemma~\ref{lem-root-pairings}:]
Put $\beta':=\beta-\alpha$. We show $\beta'=0$. Since $(\alpha', \alpha'') \leq 0$ for distinct simple roots $\alpha', \alpha'' \in \Delta$, assumptions~\ref{item-lem-unique-not-orthogonal-root}-\ref{item-lem-mult-at-most-1}
together imply that
$(\beta', \alpha) \leq 0$
and that
$(\beta, \beta)
\leq
(\beta, \alpha)$. Since $(\beta, \beta)>0$, ~\ref{item-lem-unique-not-orthogonal-root} also gives $m_{\beta}(\alpha)=1$. Since $(\beta, \alpha)=(\alpha, \alpha)+(\beta', \alpha)$, one has $(\beta, \beta) \leq (\beta, \alpha) \leq (\alpha, \alpha)$. Since $\beta$ is long,
$\alpha$ must be long too.
Hence $$(\beta, \beta)=(\beta,\alpha)=(\alpha, \alpha)$$ and $(\beta', \alpha)=0$. Since $(\beta, \beta)=(\alpha, \alpha)+(\beta',\beta')+2(\beta', \alpha)$, we conclude that $(\beta',\beta')=0$. So $\beta'=0$.

\end{proof}

\begin{lemma}
\label{lem-special-weyl-conj-long-root}
Assume $\alpha \in \Delta$ is special and $\beta \in \Phi$ is long. If $\alpha$ appears in $\beta$ (\ie $m_{\beta}(\alpha) \neq 0$), then $\beta$ is $W_{\alpha}$-conjugate to $\alpha$.
 \end{lemma}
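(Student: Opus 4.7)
The plan is to reduce to Lemma~\ref{lem-root-pairings} by moving $\beta$ within its $W_\alpha$-orbit to a position where that lemma's hypotheses become visible. I may assume $\beta \in \Phi^+$: the sign of $m_\beta(\alpha)$ determines whether $\beta$ is positive or negative, and the content of the statement is the case $m_\beta(\alpha)>0$.

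First, I would replace $\beta$ by its unique $W_\alpha$-antidominant conjugate $\beta_0$, characterized by $(\beta_0,\alpha')\leq 0$ for all $\alpha'\in\Delta\setminus\{\alpha\}$. The crucial observation is that each reflection $s_{\alpha'}$ with $\alpha'\neq\alpha$ fixes the $\alpha$-coordinate of every vector in $V$, because $s_{\alpha'}(x)=x-(x,(\alpha')^\vee)\alpha'$ only alters the $\alpha'$-component. Hence $m_{\beta_0}(\alpha)=m_\beta(\alpha)>0$; and because $\beta_0$ is a root with strictly positive $\alpha$-coefficient, all of its simple-root coefficients are non-negative (any root has either all non-negative or all non-positive coefficients in the simple basis), so $\beta_0\in\Phi^+$.

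Next, I would invoke Lemma~\ref{lem-root-pairings} with $\beta_0$ in the role of $\beta$. Its hypotheses all hold: $\beta_0$ is long (length is $W$-invariant, hence $W_\alpha$-invariant); $\beta_0\in\Phi^+$ (just shown); $(\beta_0,\alpha')\leq 0$ for every $\alpha'\in\Delta\setminus\{\alpha\}$ (by $W_\alpha$-antidominance); and finally $m_{\beta_0}(\alpha)\leq 1$, since $\alpha$ is special (so $m(\alpha)=1$) and the simple-root coefficients of any positive root are majorized by those of the highest root (a standard fact, \cf \cite{bourbaki-lie-4-6}). Lemma~\ref{lem-root-pairings} then yields $\beta_0=\alpha$, i.e., $\beta$ is $W_\alpha$-conjugate to $\alpha$.

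The main obstacle, as I see it, is the first step: verifying that the $W_\alpha$-antidominant representative of $\beta$ remains in $\Phi^+$. This is precisely where the \emph{maximal} nature of the Levi Weyl group enters, through the invariance of the $\alpha$-coordinate under $W_\alpha$. The remainder is a direct hook into Lemma~\ref{lem-root-pairings} together with the classical coefficient bound by those of the highest root.
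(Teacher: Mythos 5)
Your proof is correct and rests on the same pillars as the paper's own argument: Lemma~\ref{lem-root-pairings}, the bound $m_{\beta}(\alpha)\le m(\alpha)=1$ furnished by the highest root when $\alpha$ is special, and the invariance of the $\alpha$-coefficient under $W_{\alpha}$. The only difference is organizational: you pass in one step to the $W_{\alpha}$-antidominant representative $\beta_0$ and apply Lemma~\ref{lem-root-pairings} once, while the paper runs an induction on $\height(\beta)$, using Lemma~\ref{lem-root-pairings} at each stage to produce a reflection $s_{\alpha'}\in W_{\alpha}$ lowering the height --- which is exactly the algorithm that computes your $\beta_0$ --- so the two arguments are essentially the same.
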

\begin{proof}
Upon replacing $\beta$ by its negative if necessary, we henceforth assume $\beta$ is positive. We argue by induction on the height $\height(\beta)$ (\S\ref{sec-height}). If $\height(\beta)=1$, then $\beta=\alpha$ since $\alpha$ appears in $\beta$.

Assume $\height(\beta)>1$. Since $\alpha$ is special,~\ref{lem-root-pairings}\ref{item-lem-mult-at-most-1} holds. Since $\beta \neq \alpha$, Lemma~\ref{lem-root-pairings} implies that~\ref{lem-root-pairings}\ref{item-lem-unique-not-orthogonal-root} fails: There exists $\alpha' \in \Delta \setminus \{\alpha\}$ with $(\beta, \alpha')>0$.

We check that the root $s_{\alpha'}(\beta)=\beta-\langle \beta, (\alpha')^{\vee}\rangle \alpha'$ satisfies the induction hypothesis: Since the Weyl group preserves length, $s_{\alpha'}(\beta)$ is long.
One has $\alpha' \neq \beta$ as $\height(\beta)>1$.
Since $\alpha'$ is simple and $\beta$ positive, the reflection $s_{\alpha'}(\beta)$ is again positive.  Since $(\beta, \alpha')>0$, also $\langle \beta, (\alpha')^{\vee} \rangle >0$,
so that $\height(s_{\alpha'}(\beta))<\height(\beta)$. Finally, $\alpha' \neq \alpha$ implies that the multiplicity of $\alpha$ is unchanged: $m_{s_{\alpha'}(\beta)}(\alpha)=m_{\beta}(\alpha)=1$. By induction $s_{\alpha'}(\beta)$ is $W_{\alpha}$-conjugate to $\alpha$; since $s_{\alpha' } \in W_{\alpha}$, we conclude that also $\beta$ is $W_{\alpha}$-conjugate to $\alpha$.
\end{proof}
Since the highest root is long \cite[VI.1.8, \Prop~25(iii)]{bourbaki-lie-4-6}, in particular:
\begin{corollary}
\label{cor-special-conjugate-highest-root}
Assume $\alpha \in \Delta$ is special. Then $\alpha$ is $W_{\alpha}$-conjugate to the highest root $\alpha^h$.
\end{corollary}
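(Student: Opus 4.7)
The plan is to deduce this immediately from Lemma~\ref{lem-special-weyl-conj-long-root} applied to the positive root $\beta := \alpha^h$. To invoke the lemma we must verify its two hypotheses on $\beta$: that $\beta$ is long, and that $\alpha$ appears in $\beta$. The first hypothesis is exactly the cited fact \cite[VI.1.8,~\Prop~25(iii)]{bourbaki-lie-4-6} that the highest root is long. For the second, recall from \S\ref{sec-special-cospecial} that $\alpha$ being special means $m(\alpha) = m_{\alpha^h}(\alpha) = 1$, so in particular $m_{\alpha^h}(\alpha) \neq 0$, \ie $\alpha$ appears in $\alpha^h$.

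With both hypotheses verified, Lemma~\ref{lem-special-weyl-conj-long-root} yields that $\alpha^h$ is $W_\alpha$-conjugate to $\alpha$, which is the conclusion. Since the argument is a direct specialization of the lemma to the distinguished long root $\alpha^h$, I do not anticipate any genuine obstacle; the only thing to be careful about is simply to cite the Bourbaki fact that $\alpha^h$ is long (which fails in general for arbitrary $\Delta$-dominant roots in the multi-laced case, where $\alpha^{h_2}$ is short). No induction or case analysis is needed at this step — all the work has already been done inside Lemma~\ref{lem-special-weyl-conj-long-root}.
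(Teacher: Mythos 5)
Your proof is correct and is exactly the paper's argument: the corollary is stated as an immediate specialization of Lemma~\ref{lem-special-weyl-conj-long-root} to $\beta=\alpha^h$, using the Bourbaki fact that the highest root is long (the hypothesis that $\alpha$ appears in $\alpha^h$ is automatic, e.g.\ from $m(\alpha)=1$ as you note, or from irreducibility). Nothing further is needed.
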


\begin{lemma}
\label{lem-levi-weyl-preserves-root-mult}
Let $\alpha \in \Delta$. Assume two roots $\beta, \gamma \in \Phi$ are $W_{\alpha}$-conjugate. Then $\alpha$ appears with the same multiplicity in $\beta$ and $\gamma$, \ie $m_{\beta}(\alpha)=m_{\gamma}(\alpha)$.
\end{lemma}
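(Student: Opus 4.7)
The plan is to reduce to the action of a single simple reflection $s_{\alpha'}$ with $\alpha' \in \Delta \setminus \{\alpha\}$ and observe that such a reflection changes only the coefficient of $\alpha'$ in the simple-root expansion of any root. More precisely, for any root $\delta \in \Phi$ one has
\[
s_{\alpha'}(\delta) = \delta - \langle \delta, (\alpha')^{\vee} \rangle \alpha',
\]
so that if $\delta = \sum_{\alpha'' \in \Delta} m_{\delta}(\alpha'')\alpha''$, then the coefficient of every simple root $\alpha'' \neq \alpha'$ in $s_{\alpha'}(\delta)$ equals its coefficient in $\delta$. Specializing to $\alpha'' = \alpha$ (which is allowed since $\alpha' \neq \alpha$), we conclude $m_{s_{\alpha'}(\delta)}(\alpha) = m_{\delta}(\alpha)$.

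Since $W_{\alpha}$ is by definition generated by the simple reflections $\{s_{\alpha'} : \alpha' \in \Delta \setminus \{\alpha\}\}$, a straightforward induction on the length of a word in these generators shows that $m_{w\delta}(\alpha) = m_{\delta}(\alpha)$ for every $w \in W_{\alpha}$ and every $\delta \in \Phi$. Applying this to $\delta = \beta$ with $w \in W_{\alpha}$ such that $\gamma = w\beta$ yields $m_{\gamma}(\alpha) = m_{\beta}(\alpha)$, as required. I do not anticipate any real obstacle: the entire content of the lemma is the fact that the generators of $W_{\alpha}$ act trivially on the $\alpha$-coordinate of the simple-root expansion.
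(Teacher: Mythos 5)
Your proof is correct and is essentially the same as the paper's: both observe that $s_{\alpha'}(\delta)=\delta-\langle\delta,(\alpha')^{\vee}\rangle\alpha'$ changes only the $\alpha'$-coefficient, so the $\alpha$-multiplicity is invariant under each generator of $W_{\alpha}$ and hence under all of $W_{\alpha}$. No gaps.
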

\begin{proof}
If $\alpha' \in \Delta \setminus \{\alpha\}$, then applying $s_{\alpha'}$ to $\beta$ only alters the multiplicity of $\alpha'$ in $\beta$; in particular the multiplicity of $\alpha$ is unchanged. The result follows since $W_{\alpha}$ is generated by the $s_{\alpha'}$ with $\alpha' \in \Delta \setminus \{\alpha\}$.
\end{proof}
\begin{proof}[Proof of Theorem~\ref{th-levi-dom-quasi-constant}:]
As mentioned right after the statement of~\ref{th-levi-dom-quasi-constant}, it suffices to prove \ref{item-special-cospecial}$\Leftrightarrow$\ref{item-dom-dom}.
Let $\alpha \in \Delta$.
Assume first~\ref{item-dom-dom}, \ie that $\dom(\alpha)=\dom_{\alpha}(\alpha)$. By Lemma~\ref{lem-levi-weyl-preserves-root-mult}, $m_{\dom(\alpha)}(\alpha)=m_{\alpha}(\alpha)=1$.
If $\alpha$ is long, then $\dom(\alpha)=\alpha^h$ is the highest root, so $\alpha$ is special. If $\alpha$ is short, then $\alpha^{\vee}$ is long. Applying the previous argument in the dual root system $\Phi^{\vee}$ gives that $\alpha^{\vee}$ is special in $\Phi^{\vee}$ \ie that $\alpha$ is co-special (\S\ref{sec-special-cospecial}).

Next, assume $\alpha$ is special. Then $\dom(\alpha)=\dom_{\alpha}(\alpha)$ by Corollary~\ref{cor-special-conjugate-highest-root}. Finally, suppose $\alpha$ is co-special.
Then $\alpha^{\vee}$ is special in $\Phi^{\vee}$, so by the previous case there exists $w \in W_{\alpha}$ such that $w\alpha^{\vee}=\leftexp{h}{\alpha}^{\vee}$.
But then $w\alpha=\alpha^{h_2}=\dom(\alpha)$, so $\dom_{\alpha}(\alpha)=\dom(\alpha)$.
\end{proof}

\section{Classical examples}
\label{sec-examples}
For root systems of classical type and type $G_2$, we illustrate  Theorem~\ref{th-levi-dom-quasi-constant} in terms of the classical explicit descriptions of such root systems given in the {\em planches} of \cite{bourbaki-lie-4-6}. For the larger exceptional cases, as is so often the case, it seems necessary (or at least much easier) to revert to the general theory.
\subsection{Notation}
\label{sec-notation-examples}
Let $e_i$ be the $i$th standard basis vector of $\QQ^n$. Let $\QQ^n_0:=\{(a_1, \ldots a_n) \in \QQ^n \ | \ \sum_{i=1}^n a_i=0\}$, the hyperplane with vanishing sum of coordinates. Given a finite set $X$, let $S_X$ denote its  symmetric group, $\Sgn_X \cong (\ZZ/2)^{|X|}$ its group of sign changes and $\Sgn_X^{\even} \cong (\ZZ/2)^{|X|-1}$ the subgroup  of even sign changes on $X$; when $X=\{1,2 \ldots, n\}$ write $S_n$, $\Sgn_n$ and $\Sgn_n^{\even}$ respectively.
\subsection{Type $A_{n-1}$, $n \geq 2$} Let $V=\QQ^n_0$, let $\Phi_A=\{e_i-e_j\ | \ i \neq j\}$ and $\Delta_A=\{e_1-e_2, \ldots ,e_{n-1}-e_n\}$.
Then $(V, \Phi_A)$ is simply-laced of type $A_{n-1}$ and $\Delta_A$ is a base for which $\alpha^h=e_1-e_n$.
For all $1 \leq i \leq n-1$, the root $e_i-e_{i+1}$ is special (and co-special),   $W_{e_i-e_{i+1}}=S_i \times S_{\{i+1, \ldots, n\}}$ and $s_{e_1-e_i}s_{e_{i+1}-e_n}=(1 \ i)(i+1 \ n) \in W_{e_i-e{i+1}}$ maps $e_i-e_{i+1}$ to $\alpha^h$.

\subsection{Type $D_{n}$, $n \geq 4$} Let $V=\QQ^n$, let $\Phi_D:=\Phi_A \cup \{\pm(e_i +e_j) | 1 \leq i < j \leq n\}$ and $\Delta_D:=\Delta_A \cup \{e_{n-1}+e_n\}$. Then $(V, \Phi_D)$ is simply-laced of type $D_n$ and $\Delta_D$ is a base for which $\alpha^h=e_1+e_2$.
\subsubsection{Special roots}
There are precisely three special roots: $e_1-e_2$, $e_{n-1}-e_n$ and $e_{n-1}+e_n$; these correspond to the extremities of the Dynkin diagram.
\begin{enumerate}
\item $W_{e_1-e_2}=S_{\{2, \ldots , n \} } \ltimes \Sgn_{\{ 2, \ldots ,n\}}^{\even}$. The (even) sign change $\sgn_{\{2,3\}} \in W_{e_1-e_2}$ maps $e_1-e_2$ to $\alpha^h$.
\item $W_{e_{n-1}-e_n}= \langle S_{n-1}, s_{e_{n-1}+e_n} \rangle$ and
$ (2 \ n-1)  \circ s_{e_{n-1}+e_n} \circ (1 \ n-1) \in W_{e_{n-1}-e_n}$ maps $e_{n-1}-e_n$ to $\alpha^h$.
\item
$W_{e_{n-1}+e_n}=S_n$ and $ (1 \ n-1)(2 \ n) \in W_{e_{n-1}+e_n}$ maps $e_{n-1}+e_n$ to $\alpha^h$.
\end{enumerate}
\subsubsection{Internal roots}
For $1 <i<n-1$, the root $e_i-e_{i+1}$ and its co-root both have multiplicity $2$ (so neither special nor co-special); one has $W_{e_i-e_{i+1}} \cong S_i \times (S_{\{i+1, \ldots, n\}} \ltimes \Sgn^{\even}_{\{i+1, \ldots , n\}})$,
corresponding to $\type(\Delta \setminus \{e_i-e_{i+1}\}) \cong A_{i-1} \times D_{n-i}$.
In particular the action of $W_{e_i-e_{i+1}}$ on $\{1,\ldots , n\}$ leaves stable the decomposition $\{1,\ldots ,n\}=\{1, \ldots, i\} \coprod \{i+1, \ldots, n\}$,
so no element of $W_{e_i-e_{i+1}}$ maps $e_i-e_{i+1}$ to $\alpha^h$.

\subsection{Type $B_{n}$, $n \geq 2$} Let $V=\QQ_n$, let $\Phi_B=\Phi_D \cup \{\pm e_i \ | \ 1 \leq i \leq n\}$ and $\Delta_B=\Delta_A \cup \{e_n\}$. Then $(V, \Phi_B)$ is multi-laced of type $B_n$ and $\Delta_B$ is a base for which $\alpha^h=e_1+e_2$ and $\alpha^{h_2}=e_1$.
\subsubsection{Special and co-special roots}
The root $e_1-e_2$ is the unique special root; $W_{e_1-e_2}= S_{\{2, \ldots, n\}} \ltimes \Sgn_{\{2, \ldots, n\}}$ and  $s_{e_2}=\sgn_{\{2\}} \in W_{e_1-e_2}$  maps $e_1-e_2$ to $\alpha^h$.
The unique co-special root is $e_n$, $W_{e_n} = S_n$ and $s_{e_1-e_n}=(1 \ n) \in W_{e_n}$ maps $\alpha$ to $\alpha^{h_2}$.
\subsubsection{Internal roots}
Assume $n \geq 3$ and $1 < i<n$. Then $e_i-e_{i+1}$ is long; it and its coroot again both have multiplicity $2$ and the argument for types $D_n$ shows that $e_i-e_{i+1}$ is not in the $W_{e_i-e_{i+1}}$-orbit of $\alpha^h$.

\subsection{Type $C_{n}$, $n \geq 3$} Let $V=\QQ^n$, let $\Phi_C:=\Phi_A \cup \{\pm 2e_i \ |\ 1 \leq i \leq n\}$ and $\Delta_C:=\Delta_A \cup \{2e_n\}$. Then $(V, \Phi_C)$ is multi-laced of type $C_n$ and $\Delta_C$ is a base for which  $\alpha^h=2e_1$ and $\alpha^{h_2}=e_1+e_2$. Since $(V, \Phi_C)$ is dual to $(V, \Phi_B)$, the unique special (resp. co-special) root $2e_n$ (resp. $e_1-e_2$) in type $C_n$ is the coroot of the unique co-special (resp. special) root in type $B_n$; the element of $W_{\alpha}$ used to map $\alpha$ to $\dom(\alpha)$ in type $B_n$ also works to map $\alpha^{\vee}$ to $\dom \alpha^{\vee}$ and the same reason explains why $\dom_{\alpha^{\vee}}\alpha^{\vee} \neq \dom \alpha^{\vee}$ for the remaining "internal" simple roots which are neither special nor co-special.
\subsection{Rank $2$} Assume $(V, \Phi)$ is reduced and irreducible of rank $\dim V=2$. Recall this means $(V, \Phi)$ is of one of three types $A_2, B_2 \cong C_2, G_2$. Choose a base $\Delta \subset \Phi$ and write $\Delta=\{\alpha, \beta\}$. Then $W_{\alpha}=\{1, s_{\beta}\}$, so that the condition $\dom_{\alpha}=\dom_{\alpha}\alpha$ of Theorem~\ref{th-levi-dom-quasi-constant}\ref{item-dom-dom} will hold if and only if 
\begin{equation}
\label{eq-rank-2}    
s_{\beta}(\alpha)= 
\left\{ 
\begin{array}{lll}
\alpha^h     &  \textnormal{ if } & \alpha \textnormal{ is long} \\
\alpha^{h_2}    & 
\textnormal{ if } & \alpha \textnormal{ is short.}
\end{array} \right. 
\end{equation} 

In types $A_2, B_2 \cong C_2$, every simple root is either special, co-special or both, so that~\eqref{eq-rank-2} always holds. In type $G_2$, both roots are neither special nor co-special, so that Theorem~\ref{th-levi-dom-quasi-constant} says that~\eqref{eq-rank-2} is never satisfied. This is verified explicitly below.
\subsection{Type $G_2$} Let $V=\QQ_3^0$, $\Phi_G=\Phi_A \cup \{\pm(2e_1-e_2-e_3),\pm(2e_2-e_1-e_3),\pm(2e_3-e_1-e_2)\}$ and $\Delta_G=\{\alpha, \beta\}$ with $\alpha=e_1-e_2$, $\beta=-2e_1+e_2+e_3$. Then $(V,\Phi_G)$ is of type $G_2$, the subsystem $(V,\Phi_A)$ of short roots is of type $A_2$ and $\Delta_G$ is a base of $(V,\Phi_G)$ for which $\alpha^h=2e_3-e_1-e_2=3\alpha+2\beta$ and $\alpha^{h_2}=e_3-e_2=2\alpha+\beta$. Then Theorem~\ref{th-levi-dom-quasi-constant} amounts to:
\begin{subequations}
\begin{align}
s_{\alpha}(\beta)=-2e_2+e_1+e_3=3\alpha+\beta \neq \alpha^h
\\
 s_{\beta}(\alpha)=\alpha+\beta=e_3-e_1 \neq e_3-e_2.   
\end{align}
\end{subequations}

\bibliographystyle{plain}
\bibliography{biblio_overleaf}

\end{document}